\author[1,2]{Philip Boeken}
\author[1]{Joris M.\ Mooij}
\affil[1]{%
    Korteweg-de Vries Institute for Mathematics\\
    University of Amsterdam
}
\affil[2]{%
    Booking.com, Amsterdam, The Netherlands
}
\newcommand{\notefn}[1]{\footnote{\red{#1}}}
\title{Dynamic Structural Causal Models}
\begin{document}
\maketitle

\begin{abstract}
	We study a specific type of SCM, called a \emph{Dynamic Structural Causal Model} (DSCM), whose endogenous variables represent functions of time, which is possibly cyclic and allows for latent confounding. As a motivating use-case, we show that certain systems of Stochastic Differential Equations (SDEs) can be appropriately represented with DSCMs. An immediate consequence of this construction is a graphical Markov property for systems of SDEs. We define a \emph{time-splitting} operation, allowing us to analyse the concept of \emph{local independence} (a notion of continuous-time Granger (non-)causality). We also define a \emph{subsampling} operation, which returns a discrete-time DSCM, and which can be used for mathematical analysis of subsampled time-series. We give suggestions how DSCMs can be used for identification of the causal effect of time-dependent interventions, and how existing constraint-based causal discovery algorithms can be applied to time-series data.
\end{abstract}

\red{A revised and extended version of this work has appeared as \citet{boeken2026causal}. It adds some nuances and corrects a number of points that are treated too quickly here; these are indicated by red footnotes throughout. All proofs are deferred to \cite{boeken2026causal}.}

\section{Introduction}
Many real-world systems exhibit a non-trivial dependence on time. In science and engineering, such systems are often modelled with Ordinary Differential Equations when the dynamics are deterministic, with Random Differential Equations (RDEs) when the dynamics of a population is described but every individual can have different (deterministic) dynamics, and with Stochastic Differential Equations (SDEs) when the dynamics of each individual are inherently stochastic \citep{banks2014modeling}. In this work, we will focus on the latter formalism, which generalises the former two. SDEs do not come formally equipped with causal semantics. As their mathematical formulation is rather technical, it is a nontrivial task to apply existing methods of causal reasoning to this model class. In this work, we investigate how SDEs can be mapped into a \emph{Dynamic Structural Causal Model} (DSCM), originally introduced by \cite{rubenstein2018deterministic} and refined in this work, to naturally bridge this gap. More precisely, we
\begin{enumerate}
	\item refine the definition of DSCMs to make them specific versions of SCMs as defined by \cite{bongers2021foundations}, enhancing them with a Markov property based on $\sigma$-separation;
	\item show how certain \emph{uniquely solvable} systems of SDEs can be transformed into DSCMs;
	\item define \emph{time-splitting} and \emph{subsampling} as formal tools to analyse the time-dependent structure of DSCMs;
	\item show how time-split DSCMs allow for a graphical interpretation of local independence and that the graph of `collapsed' DSCMs are local independence graphs, yielding a local independence Markov property;
	\item give further suggestions of how existing results for `static' causal effect identification and constraint-based causal discovery are naturally applicable to DSCMs.
\end{enumerate}

\section{Systems of Stochastic Differential Equations}\label{sec:sdes}
Consider the time-index $\Tcal := [0,T]$ for some $T\in \RR$. Let $D(\Tcal, \RR^n)$ be the space of càdlàg\footnote{continue \`a droite, limité \`a gauche} functions from $\Tcal$ to $\RR^n$. Throughout, we equip $D(\Tcal, \RR^n)$ with the $J_1$ topology \citep{skorokhod1956limit} and the Borel sigma-algebra, making it a standard Borel space.
Let $(\Omega, \FF, \PP)$ denote a filtered probability space with filtration $\FF := (\Fcal_t)_{t\in \Tcal}$ that is right-continuous and contains all $\PP$-null-sets. A process $X: \Omega \times \Tcal \to \RR^n$ is $\FF$-adapted if $X(t) \in \Fcal_t$ for all $t\in\Tcal$; for predictable processes we have $X(t) \in \Fcal_{t-}$, where $\Fcal_{t-} := \bigcap_{s<t}\Fcal_s$. Let $\mathbb{S}_\FF(\Tcal, \RR^n)$ denote the class of $\RR^n$-valued semimartingales, i.e.\ processes $X : \Omega \times \Tcal \to \RR^n$ that are $\FF$-adapted with càdlàg sample paths a.s., and with (unique) decomposition $X = A + M$ with $A$ a predictable process of finite variation and $M$ a local martingale. Let $\Pcal_b$ denote the set of locally bounded predictably measurable processes.
The \emph{stochastic integral of $G$ w.r.t.\ $H$}
\begin{equation}\label{eqn:stochastic_integral}
	J: \Pcal_b \times \SSb \to \SSb, ~~(G, H) \mapsto \int_0^t G(s)\diff H(s)
\end{equation}
is a well-defined notion (\cite{protter2005stochastic}, Thm. IV.15).

For a formal definition of a system of stochastic differential equations, we require the following notion:
\begin{assumption}\label{ass:cadlag_adapted}
	Let $f : \Tcal \times D(\Tcal, \RR^m) \to \RR^n$ be a Borel-measurable function such that
	\begin{enumerate}[label=\roman*)]
		\item $t\mapsto f(t, x) \in D(\Tcal, \RR^n)$ for all $x\in D(\Tcal, \RR^m)$,
		      \item\label{ass:cadlag_adapted:2} $f(t, x) = f(t, x^t)$ for all $x\in D(\Tcal, \RR^m)$ and $t\in\Tcal$, where $x^t(s) := x(s\wedge t)$.
	\end{enumerate}
\end{assumption}

\begin{remark}\label{rem:fn_of_sm_is_sm}
	If $f$ satisfies Assumption \ref{ass:cadlag_adapted} and $X \in \SSb_\FF(\Tcal, \RR^m)$, then the process $f(t, X) $ is in $\SSb_\FF(\Tcal, \RR^n)$, and $f(t-, X):= \lim_{s\uparrow t} f(t, X)$ is an element of $\Pcal_b$ \citep{przybylowicz2023skorohod}.
\end{remark}

\begin{definition}[System of SDEs, solutions]\label{def:sdes}
	Given a filtered probability space $(\Omega, \FF, \PP)$, a \emph{system of stochastic differential equations (SDEs)} is a tuple $\Dcal = \angs{V, W, (X_V^0, X_W), f}$ with
	\begin{enumerate}[label=\roman*)]
		\item $V$ and $W$ finite disjoint index sets,
		\item for each $v\in V$ a random variable $X_v^0 :\Omega \to \RR$, and for each $w\in W$ a stochastic process $X_w \in \SSb_\FF(\Tcal, \RR)$,
		\item for each $v\in V$ a stochastic differential equation
		      \begin{equation*}
			      f_v:  X_v(t) = X_v^0 + \int_0^t g_{v}(s-, X_{\alpha(v)})\diff h_v(s, X_{\beta(v)})
		      \end{equation*}
		      for all $t\in\Tcal$, for sets $\alpha(v), \beta(v) \subseteq V \cup W$, and functions $g_v: \Tcal \times D(\Tcal, \RR^{|\alpha(v)|}) \to \RR^{m_v}$ and $h_v : \Tcal \times D(\Tcal, \RR^{|\beta(v)|}) \to \RR^{m_v}$ for some $m_v \in \NN$, both satisfying Assumption \ref{ass:cadlag_adapted}.\footnote{We write $\int_0^t g\diff h$ for $\sum_{i=1}^{m}\int_0^t g_{i}\diff h_{i}$.}
	\end{enumerate}
	The processes $g_v(t, X_{\alpha(v)})$ and $h_v(t, X_{\beta(v)})$ are called \emph{integrands} and \emph{integrators} respectively. A \emph{solution}\footnote{Commonly referred to as a \emph{strong solution}.} of a system of SDEs $\Dcal$ is a stochastic process $X_V \in \SSb_\FF(\Tcal, \RR^{|V|})$ such that $X_V = f_V(X_V^0, X_V, X_W)$ holds $\PP$-a.s.
\end{definition}

By Remark \ref{rem:fn_of_sm_is_sm}, we have for semimartingales $X_{\alpha(v)}$ and $X_{\beta(v)}$ that $g_{v}(s-, X_{\alpha(v)})$ is in $\Pcal_b$ and $h_v(s, X_{\beta(v)})$ is in $\SSb$, making the stochastic integrals well-defined (see equation \ref{eqn:stochastic_integral}). Note that we typically have $v\in \alpha(v)$. Definition \ref{def:sdes} generalises common definitions of SDEs, which typically only model the relations between the variables that appear in the integrands, and assume the integrators to be given. We allow for explicit modelling of the variables that appear in the integrators because it is mathematically feasible, has applications in controlled SDEs \citep{lyons2007differential}, and allows for the modelling of instantaneous effects in time-split DSCMs (see Section \ref{sec:time-evaluations}).

\begin{remark}
	For specific choices of integrators and integrands, SDEs can model some special cases:
	\begin{enumerate}
		\item If $g(t, X_{\alpha}) = g(t, X_\alpha(t))$ and $h$ is a Lévy process independent of $X^0$, then any solution $X$ is temporally Markov\footnote{Process $X\in \SSb$ is called \emph{temporally Markov} if $\PP(X_{t+s} | \Fcal_{t}) = \PP(X_{t+s} | X_{t})$ for all $s,t\in \Tcal$ such that $s+t \in \Tcal$; it is (temporally) \emph{strong Markov} if this holds for any stopping time $t$.} (\citealp{protter2005stochastic}, Theorem V.6.32). If additionally $g(t, X_{\alpha}) = g(X_\alpha(t))$, then the solution is strong Markov; such dynamics are called \emph{time-invariant}.
		\item If $h(s) = s$, the `stochastic integral' $\int g(s, X_{\alpha})\diff h(s)$ reduces to the Riemann integral $\int g(s, X_\alpha)\diff s$, so a system of Random Differential Equations is a special case of a system of SDEs.
		\item If $h(t, W) = W(t)$ is a Brownian Motion, any solution $X$ has continuous and possibly non-differentiable sample paths. If $h(t, W) = (t, W(t))$ and the dynamics are time-invariant, such an SDE is called an \emph{It\^o diffusion}.
		\item If $h(t, N) = N(t)$ is a jump process (e.g.\ a Poisson process), the solution $X$ can have jumps as well. If $h(t, W, N) = (t, W(t), N(t))$ and the dynamics are time-invariant, such an SDE is called a \emph{jump-diffusion}.
		\item If $H$ is a process with differentiable sample paths, an RDE $X = \int g(H'(s), X(s))\diff s$ with $g$ linear in its first argument can be suitably modelled with an SDE $X = \int f(X)\diff H(s)$ \citep{lyons2007differential}. The SDE formulation generalises this concept to a large class of non-differentiable processes $H$.
		\item The functional relationship $X(t) = h(t, X_{\beta})$ can be equivalently expressed as $X(t) = h(0, X_{\beta})+ \int_0^t \diff h(s, X_{\beta})$.
	\end{enumerate}
\end{remark}

\begin{example}\label{ex:sde}
	Consider the following system of Stochastic Differential Equations
	\begin{align*}
		\Dcal: \begin{cases}
			       X_1(t) = X_1^0 + \int_0^t g_1(s-, X_1, X_3) \diff W(s) \\
			       X_2(t) = X_2^0 + \int_0^t g_2(s-, X_1, X_2) \diff N(s) \\
			       X_3(t) = X_3^0 + \int_0^t g_3(s-, X_2, X_3) \diff W(s) \\
			       X_4(t) = X_4^0 + \int_0^t g_4(s-, X_4) \diff X_2(s)
		       \end{cases}
	\end{align*}
	with $W$ a Brownian motion, $N$ a Poisson process (independent from $W$), and $X_1^0, X_2^0, X_3^0, X_4^0$ independent random variables. Note that $X_2$ is the integrator of $X_4$.
\end{example}

To ensure existence and uniqueness of solutions of a system of SDEs, we require additional regularity of the function $g$:
\begin{assumption}\label{ass:cadlag_adapted_lip}
	Let $g : \Tcal \times D(\Tcal, \RR^m) \to \RR^n$ satisfy Assumption \ref{ass:cadlag_adapted}, and let there exist a $K\in D(\Tcal, (0, \infty))$ such that for all $x, x_1, x_2 \in D(\Tcal, \RR^m)$ we have
	
	\begin{align*}
		|g(t, x)|               & \leq K(t)\left(1+ \sup_{0\leq s \leq t}|x(s)|\right) \\
		|g(t, x_2) - g(t, x_1)| & \leq K(t)\sup_{0\leq s\leq t}|x_2(s) - x_1(s)|.
	\end{align*}
\end{assumption}

\begin{theorem}[\cite{przybylowicz2023skorohod}]\label{thm:ito_map}
	For given functions $g: \Tcal \times D(\Tcal, \RR^{k+1}) \to \RR^{m}$ and $h : \Tcal \times D(\Tcal, \RR^{\ell}) \to \RR^{m}$ that satisfy Assumptions \ref{ass:cadlag_adapted_lip} and \ref{ass:cadlag_adapted} respectively, there exists a Skorohod measurable mapping
	\begin{equation*}
		I : \RR \times D(\Tcal, \RR^{k}) \times D(\Tcal, \RR^{\ell}) \to D(\Tcal, \RR)
	\end{equation*}
	such that for any filtered probability space $(\Omega, \FF, \PP)$ with square-integrable random variable $X^0:\Omega \to \RR$ and semimartingales $A \in \SSb_\FF(\Tcal, \RR^k), B\in\SSb_\FF(\Tcal, \RR^\ell)$, the process $X := I(X^0, A, B) \in \SSb_\FF(\Tcal, \RR)$ is the unique solution of
	\begin{equation*}
		X(t) = X^0 + \int_0^t g(s-, X, A)\diff h(s, B).
	\end{equation*}
\end{theorem}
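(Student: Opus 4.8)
The plan is to obtain the map $I$ in two stages. First I would construct, on a fixed filtered probability space, the unique solution by a Picard/Banach fixed-point argument that exploits the Lipschitz and linear-growth structure of Assumption~\ref{ass:cadlag_adapted_lip}. Then I would upgrade this probabilistic solution to a single Borel-measurable map on Skorohod space that is valid simultaneously for every probability space and every admissible pair of driving semimartingales. The separation is essential, because a priori the solution is only defined up to indistinguishability on each fixed space, whereas the theorem asks for a genuine function of the paths.

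For the existence-and-uniqueness stage I would fix $(\Omega, \FF, \PP)$, $X^0$, $A$ and $B$, and work in a complete metric space of $\FF$-adapted càdlàg processes, e.g.\ those with finite norm $\|X\| = \bigl\| \sup_{t\in\Tcal} |X(t)| \bigr\|_{L^2}$ (or an $H^2$-type semimartingale norm). Define the operator $\Phi(X)(t) := X^0 + \int_0^t g(s-, X, A)\diff h(s, B)$, which is well defined as a map into $\SSb_\FF(\Tcal, \RR)$ by Remark~\ref{rem:fn_of_sm_is_sm} and (\ref{eqn:stochastic_integral}). Using the Burkholder--Davis--Gundy estimates for the stochastic integral together with the bounds of Assumption~\ref{ass:cadlag_adapted_lip}, one shows that $\Phi$ maps the space into itself and is a contraction after localizing via stopping times that control the process $K$ and the quadratic variation of $B$; patching the resulting local solutions over a finite cover of $\Tcal$ gives a global solution, and the same Lipschitz bound yields uniqueness via a Gronwall argument.

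The decisive second stage is \emph{measurability}. Since the stochastic integral is not a pathwise operation, I would approximate $I$ by explicit Euler--Maruyama/Riemann-sum schemes $I_n$ associated to a dyadic partition of $\Tcal$: on each mesh interval the integrand is frozen at its left endpoint, so $I_n$ is a finite composition of path evaluations, the measurable functions $g$ and $h$, and increments of the driving paths. Hence each $I_n \colon \RR \times D(\Tcal, \RR^{k}) \times D(\Tcal, \RR^{\ell}) \to D(\Tcal, \RR)$ is Borel measurable (and in fact $J_1$-continuous at inputs with no jumps at the partition times). One then shows $I_n(X^0, A, B) \to X$ in the topology of uniform convergence on compacts in probability, using the regularity in Assumptions~\ref{ass:cadlag_adapted} and~\ref{ass:cadlag_adapted_lip}. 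Because a limit in probability of Borel-measurable maps is again Borel measurable (along an a.s.-convergent subsequence), defining $I$ as such a limit on the universally measurable set where the limit exists---and arbitrarily elsewhere---yields the required map; since each $I_n$ references the probability space only through the input paths, the same $I$ serves every $(\Omega, \FF, \PP)$.

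The hard part will be precisely this uniform measurable convergence: controlling the discretization error \emph{as a functional of the paths}, uniformly across all filtered probability spaces and all admissible semimartingales, and verifying that the exceptional null sets can be absorbed into a single universally measurable set so that $I$ is genuinely a function on Skorohod space rather than an equivalence class of processes on each fixed space. This is exactly the content established by \cite{przybylowicz2023skorohod}, whose Skorohod-measurability estimates for the Itô map under Assumptions~\ref{ass:cadlag_adapted} and~\ref{ass:cadlag_adapted_lip} I would invoke to make the limiting argument rigorous.
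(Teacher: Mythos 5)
The paper does not prove this statement at all: Theorem~\ref{thm:ito_map} is imported verbatim from \cite{przybylowicz2023skorohod}, and its proof lives entirely in that reference. Read as a self-contained argument, your proposal has a genuine gap which is in fact a circularity: at the decisive step --- upgrading the per-probability-space solution to a single Borel map on Skorohod space --- you state that you would ``invoke'' the Skorohod-measurability estimates of \cite{przybylowicz2023skorohod}. That is precisely the theorem being proved, so the argument as written establishes nothing beyond what the citation already gives. Either the theorem is taken as a black box (which is what the paper does), or the measurability stage must actually be carried out.

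The specific point your sketch glosses over is why any limiting procedure yields \emph{one} map valid for \emph{every} filtered probability space. Convergence in probability, and the choice of an a.s.-convergent subsequence, are notions relative to a fixed law of $(X^0, A, B)$; different admissible semimartingales $B$ have different characteristics, so the localization constants in your Burkholder--Davis--Gundy/Gronwall estimates, and hence the exceptional null sets and subsequences, vary with the space. To obtain a genuine path functional you must define $I$ deterministically --- for instance as the pointwise limit of the full sequence of Euler maps $I_n$ on the set of input paths where that limit exists, and arbitrarily elsewhere --- and then prove that under every admissible law the \emph{full} sequence converges a.s., which requires quantitative rates that hold uniformly enough over the whole class of driving semimartingales. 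That uniformity is exactly the technical content of \cite{przybylowicz2023skorohod}; without reproducing it, your second stage does not close. Your first stage (fixed-point existence and uniqueness under the functional Lipschitz condition of Assumption~\ref{ass:cadlag_adapted_lip}, \`a la Protter) is fine and standard, but it was never the issue.
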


We refer to such a solution function $I$ as an \emph{It\^o map}. The above result straightforwardly extends to processes $X$ taking values in $\RR^n$.

\subsection{Towards causal reasoning with SDEs}
To analyse whether a system of SDEs can be solved, we use the dependency structure between the processes, which can be suitably depicted with a graph:
\begin{definition}[Graph of system of SDEs]
	For a system of SDEs $\Dcal = \angs{V, W, (X_V^0, X_W), f}$, let $G(\Dcal) = (V\cup W, E)$ be the directed graph where $E := \{v \to w : w \in V, v \in \alpha(w)\cup \beta(w) \setminus \{w\}\}$.
\end{definition}

\begin{example}
	Let $\Dcal$ be the systems of SDEs from Example \ref{ex:sde}. The graph $G(\Dcal)$ is given in Figure \ref{fig:sde_graph}. Outgoing edges of integrators are displayed in red.
\end{example}
\begin{figure}[!htb]
	\centering
	\begin{tikzpicture}[scale=1,xscale=1.1,yscale=1]
		\node[var] (X1) at (0, 0) {$X_1$};
		\node[var] (X3) at (0.5, -1) {$X_3$};
		\node[var] (X2) at (-.5, -1) {$X_2$};
		\node[var] (X4) at (0, -2) {$X_4$};
		\node[var] (xi1) at (-1.5, 0) {$X^0_1$};
		\node[var] (xi2) at (-1.5, -1) {$X^0_2$};
		\node[var] (xi3) at (1.5, -1) {$X^0_3$};
		\node[var] (xi4) at (-1.5, -2) {$X^0_4$};
		\node[var] (W1) at (1.5, 0) {$W$};
		\node[var] (N3) at (1.5, -2) {$N$};
		\draw[arr] (X1) to (X2);
		\draw[arr] (X2) to (X3);
		\draw[arr] (X3) to (X1);
		\draw[arr,color=red] (X2) to (X4);
		\draw[arr] (xi1) to (X1);
		\draw[arr] (xi2) to (X2);
		\draw[arr] (xi3) to (X3);
		\draw[arr] (xi4) to (X4);
		\draw[arr,color=red] (W1) to (X1);
		\draw[arr,color=red] (W1) to (X3);
		\draw[arr,color=red] (N3) to (X2);
	\end{tikzpicture}
	\caption{The directed graph $G(\Dcal)$.}
	\label{fig:sde_graph}
\end{figure}

By partitioning the set of endogenous processes $V$ into \emph{strongly connected components} (SCCs) (subsets whose every two vertices are connected by a directed path) and assuming that no element of such a SCC is the integrator of another element in the SCC (so no directed cycle contains a red edge), we can iteratively apply Theorem \ref{thm:ito_map} along the topological order of the SCCs to obtain a unique solution.

\begin{proposition}[Unique solvability of a system of SDEs]\label{thm:solvable_sde}
	Let $\Dcal = \angs{V, W, (X_V^0, X_W), f}$ be a system of SDEs such that for every SDE $f_v$, the parameters $g_v$ and $h_v$ satisfy Assumptions \ref{ass:cadlag_adapted_lip} and \ref{ass:cadlag_adapted} respectively. If for every $v\in V$ we have $\beta(v) \cap \Sc(v) = \emptyset$,\footnote{$\Sc(v)$ denotes the strongly connected component of $v$.} there exists an It\^o map $I_V$ such that $X_V := I_V(X_V^0, X_W) \in \SSb_\FF(\Tcal, \RR^{|V|})$ is the unique solution of $\Dcal$.
\end{proposition}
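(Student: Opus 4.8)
The plan is to exploit the acyclic structure of the \emph{condensation} of $G(\Dcal)$ and to solve the system one strongly connected component at a time, invoking the vector-valued extension of Theorem \ref{thm:ito_map} at each step. Write $S_1, \dots, S_p$ for the strongly connected components of the subgraph of $G(\Dcal)$ induced by $V$, ordered according to a topological order of the condensation, which exists since the condensation is a DAG. The key structural observation is the following: for $v \in S_j$, any $u \in V$ with $u \in (\alpha(v)\cup\beta(v))\setminus\{v\}$ contributes an edge $u \to v$ to $G(\Dcal)$, so if $u \notin S_j$ then $u$ lies in a component strictly preceding $S_j$. Moreover, the hypothesis $\beta(v)\cap\Sc(v)=\emptyset$ forces every integrator variable of $v$ that lies in $V$ into an earlier component, while the remaining integrator variables lie in $W$. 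Hence, for the subsystem indexed by $S_j$, \emph{all integrators are external to $S_j$}, whereas the integrands may depend on the solution $X_{S_j}$ itself as well as on external variables.

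For the inductive step I would assume that, for every component preceding $S_j$, a unique solution has been constructed as a semimartingale that is a Skorohod measurable function of $X_V^0$ and $X_W$. Collecting these together with $X_W$ yields auxiliary semimartingale inputs $A_j$ (the external integrand variables appearing in some $\alpha(v)$, $v\in S_j$) and $B_j$ (the external integrator variables appearing in some $\beta(v)$, $v\in S_j$), both semimartingales by the induction hypothesis and Remark \ref{rem:fn_of_sm_is_sm}. Regarding the external coordinates as parameters, the equations for $v \in S_j$ become
\begin{equation*}
    X_v(t) = X_v^0 + \int_0^t g_v(s-, X_{S_j}, A_j) \diff h_v(s, B_j),
\end{equation*}
i.e.\ a vector SDE whose solution variable is $X_{S_j}$ and whose integrators are the \emph{fixed} semimartingales $B_j$. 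Applying the $\RR^{|S_j|}$-valued version of Theorem \ref{thm:ito_map} produces an It\^o map $I_{S_j}$ with $X_{S_j} := I_{S_j}(X_{S_j}^0, A_j, B_j)$ the unique solution of this subsystem. Composing $I_{S_j}$ with the It\^o maps already built for the earlier components (substituting their outputs into $A_j$ and $B_j$) keeps the map Skorohod measurable and expresses $X_{S_j}$ through $X_V^0$ and $X_W$ alone; iterating over $j = 1,\dots,p$ and concatenating the outputs yields the desired $I_V$ with $X_V := I_V(X_V^0, X_W) \in \SSb_\FF(\Tcal, \RR^{|V|})$.

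Uniqueness follows by the same induction: the solution of $S_1$ is unique by Theorem \ref{thm:ito_map}, and once the unique solutions of $S_1,\dots,S_{j-1}$ are fixed, the inputs $A_j, B_j$ are determined, so the uniqueness clause of Theorem \ref{thm:ito_map} forces $X_{S_j}$ to be unique. Since any global solution of $\Dcal$ restricts to a solution of each subsystem, it must coincide component-by-component with the constructed process.

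The main obstacle I anticipate is verifying that the ``reduced'' coefficients of each subsystem still satisfy the hypotheses of Theorem \ref{thm:ito_map}. Concretely, after fixing the external inputs $A_j, B_j$, one must check that $(t, x_{S_j}) \mapsto g_v(t, x_{S_j}, A_j)$ inherits Assumption \ref{ass:cadlag_adapted_lip} — the linear growth and Lipschitz bounds, with a possibly enlarged $K$ absorbing the supremum norm of the external càdlàg paths — and that $h_v(\cdot, B_j)$ inherits Assumption \ref{ass:cadlag_adapted}, so that the vector version of Theorem \ref{thm:ito_map} genuinely applies; one also needs the square-integrability of each $X_v^0$ demanded there to be available. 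Beyond this bookkeeping, the remaining point requiring care is that a composition of Skorohod measurable It\^o maps is again Skorohod measurable, which follows from the standard stability of measurability under composition once each factor is known to be measurable.
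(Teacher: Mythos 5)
Your proposal is correct and follows essentially the same route as the paper: decompose $V$ into strongly connected components, order them topologically, note that the hypothesis $\beta(v)\cap\Sc(v)=\emptyset$ places all integrators of a component outside it, and inductively apply (the vector-valued version of) Theorem \ref{thm:ito_map} to each component with the previously constructed It\^o maps substituted for the external inputs. If anything, you are more explicit than the paper about the remaining bookkeeping (inheritance of Assumptions \ref{ass:cadlag_adapted_lip} and \ref{ass:cadlag_adapted} by the reduced coefficients, square-integrability of the initial conditions, and measurability of compositions), which the paper's proof leaves implicit.
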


We refer to a system of SDEs $\Dcal$ that satisfies Proposition \ref{thm:solvable_sde} as \emph{uniquely solvable}.

\begin{remark}
	A system of SDEs is assumed to have a certain causal interpretation: for the SDEs in Definition \ref{def:sdes}, it is assumed that the left-hand side is \emph{determined} by variables on the right-hand side. In Section \ref{sec:sde_to_dscm} we will indeed interpret the It\^o map $I_v$ for and SDE $f_v$ as a \emph{structural equation} of a DSCM. This construction then gives a clear causal interpretation to the graph $G(\Dcal)$, where the processes $V$ and $W$ are interpreted as endogenous and exogenous respectively. Compared to existing work where for a \emph{systems of constraints} one deduces the causal structure (e.g.\ \citealp{iwasaki1994causality,blom2021conditional}), our setting makes relatively strong assumptions.
\end{remark}

Another important causal aspect that is engrained in the system of SDEs is whether processes exhibit instantaneous dependencies. Although not explicitly stated in \cite{przybylowicz2023skorohod}, one can verify via their proof of Theorem \ref{thm:ito_map} that the It\^o map $I$ satisfies $I(x^0, a, b)^t = I(x^0, a^{t-}, b^t)^t$. As it will play an important role in the analysis of temporal dynamics of DSCMs, we formally define this concept:

\begin{definition}[Adapted, predictable dependence]\label{def:adapted_predictable}
	Let $g : D(\Tcal, \RR^m) \to D(\Tcal, \RR^n)$ be a Borel-measurable function. We say that the dependence of $g$ on $x$ is \emph{adapted} if $g(x)^t = g(x^t)^t$ and \emph{predictable} if $g(x)^t = g(x^{t-})^t$ for all $t\in \Tcal$.
\end{definition}

\begin{remark}\label{rem:sol_fn_adaptedness}
	For given process $X$ and $Y:=g(X)$ for measurable $g$, we have $Y \in \Fcal_t^X$ if $g$ is adapted, and $Y\in \Fcal_{t-}^X$ if $g$ is predictable,\footnote{Here, $\Fcal_t^X$ denotes the filtration generated by $X$.} motivating the nomenclature \emph{adapted} and \emph{predictable}.
	For any uniquely solvable system of SDEs $\Dcal$ and any $v\in V$, the process $X_v$ has predictable dependence on $X_{\alpha(v)}$ and adapted (and thus possibly instantaneous) dependence on $X_{\beta(v)}$. Graphically, we depict adapted dependencies with red edges.
\end{remark}

\section{Dynamic Structural Causal Models}\label{sec:dscms}
Following \cite{bongers2021foundations} and \cite{forre2023mathematical}, a \emph{Structural Causal Model} (SCM) is a tuple $\Mcal = \angs{V, W, \Xcal_V, \Xcal_W, f, \PP(X_W)}$ where $V, W$ are disjoint finite index sets of \emph{endogenous variables} and \emph{exogenous variables} respectively, the \emph{domains} $\Xcal_V = \prod_{i\in V}\Xcal_i$ and $\Xcal_W = \prod_{i\in W}\Xcal_i$ are products of \emph{standard Borel spaces} $\Xcal_i$, the \emph{exogenous distribution} $\PP(X_W) = \bigotimes_{w\in W}\PP(X_w)$ is a product of probability distributions, and the \emph{causal mechanism} $f : \Xcal \to \Xcal_V$ is a measurable function. Typically, SCMs are used to model `static' systems, where variables take values in $\RR$, for example. Discrete-time dynamical systems are often modelled with a copy of all variables $X_V$ for every time-step; an approach that does not directly apply to continuous-time dynamical systems. Inspired by \citep{rubenstein2018deterministic} and \cite{bongers2022causal}, we define a specific type of SCM where variables are functions on some time-interval, and the structural equations respect temporal causality:

\begin{definition}[DSCM]
	Given a time-interval $\Tcal = [0,T]$ for some $T\in \RR_+$ or $\Tcal \subseteq \{1, ..., T\}$ for some $T\in \NN$, a \emph{Dynamic Structural Causal Model} is an SCM $\Mcal = \angs{V, W, \Xcal_V, \Xcal_W, f, \PP}$ with
	\begin{enumerate}[label=\roman*)]
		\item $V := V_p \times \Ecal$ with endogenous processes $V_p$ and $\Ecal$ a set of disjoint \emph{evaluation intervals} or \emph{points} in $\Tcal$;\footnote{For $(v, \Ical) \in V\times \Ecal$ we write $v^{\Ical} := (v, \Ical)$ and $X_v^{\Ical} := X_{(v, \Ical)}$ and $X_v := X_v^{\Tcal}$.}
		\item exogenous processes $W$;
		\item standard Borel spaces $\Xcal_V = \bigtimes_{(v, \Ical)\in V\times \Ecal} D(\Ical, \RR)$ and $\Xcal_W = \bigtimes_{w \in W} D(\Tcal, \RR)$;
		\item measurable structural equations $f: \Xcal_V \times \Xcal_W \to \Xcal_V$ that are \emph{adapted} (see Definition \ref{def:adapted_predictable});
		\item exogenous distribution $\PP(X_W) = \bigotimes_{w\in W}\PP(X_w)$.
	\end{enumerate}
\end{definition}
The set $\Ecal$ denotes the time intervals (or points) on which we evaluate the processes $X_{V_p}$. For example, when $\Ecal = \{\{0\}, (0,T]\}$, for every $v\in V_p$ we will consider the variables $X_v^0$ and $X_v^{(0,T]}$. We refer to a DSCM as \emph{collapsed} if $\Ecal = \Tcal$. For $v\in V_p$, we denote with $\alpha(v)$ and $\beta(v)$ the sets of parents of $v$ on which it has predictable and adapted dependence respectively.

For notational simplicity, endogenous parameters like initial values or parameters that `materialise' at some $t_0 \in \Tcal$ (i.e.\ other variables are only allowed to depend on it from $t_0$ onwards) are modelled as elements of $D(\{t_0\}, \RR)$. For a function $x:(s, t)\to \RR$ we use the convention that $x(r) = x(s+)$ for all $r < s$ and $x(u) = x(t-)$ for all $u > t$.\footnote{Here, $x(t+)$ denotes $\lim_{s \downarrow t} x(s)$.}


We have refined the definition of DSCMs as given by \cite{rubenstein2018deterministic} by 1) ensuring the structural equations respect temporal causality, 2) allowing for stochasticity through the exogenous variables, and 3) picking a standard Borel space ($D(\Tcal, \RR)$) as space of trajectories, making for all $A, B\subseteq V\cup W$ the conditional distribution $\PP(X_A \mid X_B)$ well-defined, and with that giving a well-defined notion of conditional independence.
DSCMs differ from \emph{Structural Dynamical Causal Models} (SDCMs) \citep{bongers2022causal} as we model entire sample paths as a whole, instead of separately at every time $t\in\Tcal$. When $\Tcal$ or $\bigcup \Ecal$ is a discrete set, the DSCM reduces to a structural vector autoregression model \citep{peters2013causal,malinsky2018causala} with possible serial dependence of the noise variables, or a Dynamic Bayesian Network \citep{dean1989model}.

Perfect interventions on DSCMs are defined exactly the same as for SCMs:
\begin{definition}[Perfect interventions on DSCMs]
	Given a DSCM $\Mcal$, intervention target $T\subseteq V$ and intervention value $x_T\in \Xcal_T$, the perfectly intervened DSCM is defined as $\Mcal_{\Do(X_T=x_T)}:= \angs{V, W, \Xcal_V, \Xcal_W, f^\circ, \PP(X_{W})}$ where the components of the intervened causal mechanism $f^\circ : \Xcal_V\times \Xcal_W \to \Xcal_{V}$ are given by
	\begin{equation*}
		f^\circ_v(x_V, x_W) := \begin{cases}
			f_v(x_V, x_W) & \text{if $v \in V\setminus T$} \\
			x_v           & \text{if $v \in T$}.
		\end{cases}
	\end{equation*}
\end{definition}

For a given SCM $\Mcal$, a random variable $X_V \in \Xcal_V$ is a \emph{solution} of the SCM if $X_V = f(X_V, X_W)$ holds $\PP(X_W)$-a.s. Existence and uniqueness of solutions are generally not guaranteed for (intervened) cyclic SCMs. To this end, \cite{bongers2022causal} introduced the following class of SCMs:

\begin{definition}[Simple (D)SCM]\label{def:simple_dscm}
	A (D)SCM is simple if for all $O\subseteq V$ (write $T := V\setminus O$) there exists a measurable function $g_O:\Xcal_{T} \times \Xcal_W\to \Xcal_O$ such that for $\PP(X_W)$-almost all $x_W \in \Xcal_W$, for all $x_V \in \Xcal_V$ we have $x_O = g_O(x_{T}, x_W)$ if and only if $x_O = f_O(x_O, x_T, x_W)$.
\end{definition}

For a simple DSCM $\Mcal$, the random variable $g_O(x_T,X_W)$ is a solution for $\Mcal_{\Do(X_T = x_T)}$, implying the existence of a well-defined \emph{interventional distribution} $\PP(X_V|\Do(X_T = x_T))$. The function $g_O$ is referred to as a \emph{solution function} for $\Mcal_{\Do(X_T=x_T)}$. The following proposition shows that solutions of DSCMs obey temporal causality:

\begin{proposition}\label{thm:solution_function_adapted}
	Given a simple DSCM $\Mcal$, the solution function $g_{O}(x_T,x_W)$ of the intervened DSCM $\Mcal_{\Do(X_T = x_T)}$ is adapted.\notefn{Simplicity does not imply adaptedness of the solution function. A sufficient condition is that for every $0\leq t\leq T$ the DSCM restricted to $[0,t]$ is simple.}
\end{proposition}

\begin{remark}
	Following Remark \ref{rem:sol_fn_adaptedness}, adaptedness of the solution function $g_{O}(x_T,x_W)$ is equivalent to adaptedness of solution process $X_V := g_O(x_T, X_W)$ to a canonical filtration on the underlying probability space $\Xcal_{T}\times \Xcal_W$.
\end{remark}

In line with marginalisation of probability distributions, the marginalisation of a simple DSCM is defined as follows:
\begin{definition}[Marginalisation]
	For simple DSCM $\Mcal$ and subset $L \subseteq V$ (write $O := V\setminus L$), the marginalised DSCM $\Mcal_{\marg(L)} := \angs{O, W, \Xcal_{O}, \Xcal_W, \tilde{f}, \PP(X_W)}$ has marginal causal mechanism
	\begin{equation*}
		\tilde{f}(x_O, x_W) := f_O(x_O, g_L(x_{O}, x_W), x_W),
	\end{equation*}
	where $g_L$ is the solution function in $\Mcal_{\Do(X_O = x_O)}$.
\end{definition}

\begin{proposition}\label{thm:marginalised_simple_dscm_is_simple}
	The class of simple DSCMs is closed under marginalisation.\notefn{This does not hold for the class of simple DSCMs as stated, as the proof relies on adaptedness of the solution function (Proposition \ref{thm:solution_function_adapted}). It does hold both under the stronger condition that the DSCM restricted to $[0,t]$ is simple for every $0\leq t\leq T$, and under a much weaker notion of unique solvability that quite a general class of SDE-induced DSCMs satisfies \citep{boeken2026causal}.}
\end{proposition}

\cite{bongers2021foundations} show that marginalised simple SCMs are observationally, interventionally and counterfactually equivalent to the original SCM with respect to the observed variables. This makes marginalisation a powerful notion of abstraction, as any causal inference on the observed part of a system remains valid in the underlying system.

\subsection{A Markov property for simple DSCMs}
For SCM $\Mcal$ and nodes $i \in V\cup W$ and $j\in V$ we call $i$ a \emph{parent} of $j$ if there does not exist a measurable function $\tilde{f}_j : \Xcal_{V\cup W \setminus \{i\}}\to \Xcal_j$ such that $f_j(x_V, x_W) = \tilde{f}_j(x_{V\cup W \setminus \{i\}})$ for $\PP(X_W)$-almost all $x_W\in\Xcal_W$ and for all $x_V\in \Xcal_V$. We call a \emph{directed graph} $G^+(\Mcal) := (V\cup W, E)$ with directed edges $E = \{i\to j : i\in V\cup W, j\in V, \text{$i$ is a parent of $j$ in $\Mcal$}\}$ the \emph{augmented graph} of $\Mcal$. Often we don't display the exogenous nodes in the graph, in which case we consider the \emph{directed mixed graph} $G(\Mcal)=(V, E')$ where $E' = E\cup \{i \leftrightarrow j : i, j \in V, (i \leftarrow k \to j) \in G(\Mcal) \text{ for some $k\in W$}\}$, where $E$ stems from $G^+(\Mcal)$. We call an SCM \emph{cyclic} if its graph $G(\Mcal)$ has a directed cycle. Simple SCMs can be cyclic, but cannot have self-cycles.
A suitable extension of $d$-separation \citep{pearl2009causality} from DAGs to DMGs is the following notion of $\sigma$-separation:

\begin{definition}[$\sigma$-separation, \cite{forre2017markov,bongers2021foundations}]
	Given a DMG $G=(V, E)$, a set of nodes $C\subset V$ and a walk $\pi$ in DMG $G$:
	\begin{enumerate}
		\item a non-collider is called \emph{blockable} if it has a child on the walk that is not in the same strongly connected component,
		\item the walk $\pi$ is called \emph{$\sigma$-blocked by $C$} if there is a collider on $\pi$ that is not in $\Anc(C)$, or if there is a blockable non-collider on $\pi$ in $C$.
	\end{enumerate}
	For sets of nodes $A, B, C\subseteq V$, we call $A$ and $B$ \emph{$\sigma$-separated} given $C$, written $A\perp_G^\sigma B \given C$, if all paths between $A$ and $B$ are $\sigma$-blocked by $C$.
\end{definition}

In general $\sigma$-separation implies $d$-separation, and for acyclic DMGs the two notions coincide.

\begin{example}
	In the DMG $G$ from Figure \ref{fig:sde_graph} we have $X_1^0 \Perp^d_G X_2^0 \mid X_1, X_2$, and $X_1^0 \nPerp^\sigma_G X_2^0 \mid X_1, X_2$.
\end{example}

\begin{theorem}[Markov Property, \cite{forre2017markov,bongers2021foundations}]\label{thm:markov_property}
	For simple (D)SCM $\Mcal$ with distribution $\PP(X_V, X_W)$, graph $G(\Mcal)$ and (not necessarily disjoint) sets $A, B, C \subseteq V\cup W$, we have\notefn{For a class of (possibly cyclic) additive-noise SDEs driven by Brownian motion, \citet{boeken2026causal} strengthen Theorem \ref{thm:markov_property} to a $d$-separation Markov property. For discrete-time systems without instantaneous cycles, the $d$-separation Markov property follows from \cite{ferreira2024identifying}.}
	\begin{equation*}
		A \perp_{G(\Mcal)}^\sigma B \given C \implies X_A \Indep_{\PP} X_B \given X_C.
	\end{equation*}
\end{theorem}
A simple (D)SCM $\Mcal$ is called $\sigma$-\emph{faithful} if the reverse implication holds; this need not hold in general.

\section{Constructing a DSCM from a system of SDEs}\label{sec:sde_to_dscm}
As mentioned before, one of our goals is to map a system of SDEs $\Dcal$ to a (D)SCM, so we can apply all tools that are available for SCMs to SDEs. However, an SDE cannot directly be interpreted as structural equation of a DSCM, as SDEs are not equations between variables taking values in $D(\Tcal, \RR)$, but in $\SSb(\Tcal, \RR)$.
However, when $\Dcal$ is uniquely solvable, we \emph{can} unambiguously interpret the It\^o maps of $\Dcal$ as the structural equations of a DSCM $\Mcal_\Dcal$.

\begin{definition}[DSCM induced by a system of SDEs]
	Given a uniquely solvable system of SDEs $\Dcal = \angs{V_p, W_p, (X_{V_p}^0, X_{W_p}), f}$ on a filtered probability space $(\Omega, \FF, \PP)$ and time-interval $[0,T]$, we define \emph{the simple DSCM induced by $\Dcal$} as $\Mcal_\Dcal := \angs{V, W, \Xcal_V, \Xcal_W, f^*, \PP(X_W)}$ with
	\begin{enumerate}
		\item $V := V_p \times \Ecal$  with $\Ecal := \Tcal$;
		\item $W := W_0 \cup W_p$ with $W_0 := \{v^0 : v\in V_p\}$ a copy of $V_p$ as the indices of the initial values;
		\item for every $v\in V_p$ the structural equation
		      \begin{equation*}
			      f^*_{v}(x_V, x_W) := I_{v}(x_{v}^0, x_{\alpha(v)}, x_{\beta(v)})
		      \end{equation*}
		      where $I_v$ is the It\^o map for the SDE $f_v$;
		\item $\Xcal_V := D(\Tcal, \RR^{|V|})$ and $\Xcal_W := D(\Tcal, \RR^{|W|})$;
		\item $\PP(X_W)$ is the law of $(X_{V_p}^0, X_{W_p})$.
	\end{enumerate}
\end{definition}
Note that $G^+(\Mcal_\Dcal) = G(\Dcal)$, and does not have any self-loops $v\to v$. Interventions on a SDE $\Dcal$ can be defined similarly as for DSCMs,\notefn{The It\^o map of Theorem \ref{thm:ito_map} only represents the stochastic integral for every \emph{semimartingale} integrator. When intervening on the integrator and thus setting it to a deterministic path, this path needs to be of finite variation for it to be a semimartingale. Perfect interventions on endogenous integrators should therefore be restricted to intervention values of finite variation.} intervening and mapping to a DSCM commute, and $\Mcal_\Dcal$ is indeed simple.\notefn{Simplicity of $\Mcal_\Dcal$ does not follow as claimed, since Theorem~\ref{thm:ito_map} yields, for each adapted input process $X_T$ (in the notation of Definition~\ref{def:simple_dscm}), a solution that is unique up to a null set that may depend on $X_T$, whereas simplicity in the sense of \cite{bongers2021foundations} requires the fixed-point and uniqueness properties to hold on a single null set that is valid for all values $x_T$. Hence, the $\sigma$-separation Markov property and do-calculus do not follow automatically for $\Mcal_\Dcal$, and neither does closure of this class under marginalisation. Nevertheless those results hold, as shown by \cite{boeken2026causal}.}

\begin{example}\label{ex:dscm}
	Let $\Dcal$ be the systems of SDEs from Example \ref{ex:sde}. The structural equations of the induced DSCM $\Mcal_\Dcal$ are given by:
	
	\begin{equation*}
		\Mcal_\Dcal : \begin{cases}
			X_1 = f_1(X^0_1, X_3, W) \\
			X_2 = f_2(X^0_2, X_1, N) \\
			X_3 = f_3(X^0_3, X_2, W) \\
			X_4 = f_4(X^0_4, X_2).
		\end{cases}
	\end{equation*}
	The augmented graph $G^+(\Mcal_\Dcal) = G(\Dcal)$ is given in Figure \ref{fig:sde_graph}, and $G(\Mcal_\Dcal)$ is given in Figure \ref{fig:dscm_graph}, where the bidirected edge represents the latent confounder $W$.
\end{example}
\begin{figure}[!htb]
	\centering
	\begin{tikzpicture}[scale=1,xscale=1.1,yscale=1]
		\node[var] (X1) at (0, 0) {$X_1$};
		\node[var] (X3) at (0.5, -1) {$X_3$};
		\node[var] (X2) at (-.5, -1) {$X_2$};
		\node[var] (X4) at (0, -2) {$X_4$};
		\draw[arr] (X1) to (X2);
		\draw[arr] (X2) to (X3);
		\draw[arr] (X3) to (X1);
		\draw[arr,color=red] (X2) to (X4);
		\draw[biarr,bend left,color=red] (X1) to (X3);
	\end{tikzpicture}
	\caption{The directed mixed graph $G(\Mcal_\Dcal)$.}
	\label{fig:dscm_graph}
\end{figure}

\begin{remark}
	A technical but important detail is that the It\^o maps $f^*_v$ (and thus the structural equations of $\Mcal_\Dcal$) are well-defined \emph{everywhere} on their domain, due to the result of \cite{przybylowicz2023skorohod}. If one obtains such an It\^o map through the factorisation lemma (\cite{kallenberg2021foundations}, Lemma 1.14) it is not everywhere uniquely defined, rendering the effects of interventions outside this essential support ambiguous. It is also important to note that the It\^o maps have the right measurability properties (with respect to the Borel sets generated by the $J_1$ topology).
\end{remark}

\begin{remark}
	The Markov property for DSCMs (Theorem \ref{thm:markov_property}) is closely related to the Markov property for the SDCM framework for RDEs \citep{bongers2022causal}. We conjecture that for a given system $\Dcal$ of uniquely solvable RDEs, the graph $G(\Mcal_\Dcal)$ of DSCM $\Mcal_\Dcal$ is a subgraph of the graph $G(\Rcal_\Dcal)$ of the SDCM $\Rcal_\Dcal$, which would make the DSCM Markov property at least as powerful as the SDCM Markov property.
\end{remark}

\section{Time-evaluations of DSCMs}\label{sec:time-evaluations}
\begin{figure*}[!htb]
	\vspace{-10pt}
	\centering
	\begin{tikzpicture}[scale=1,xscale=1.3,yscale=1.2]
		\node[] () at (-.75, 1.8) {$G(\Mcal)$:};
		\node[] (W1) at (-1, 1) {$W$};
		\node[] (W2) at (-1, -2) {$N$};
		\node[] (X1) at (0, 0) {$X_1^{[0,T]}$};
		\node[] (X2) at (0, -1) {$X_2^{[0,T]}$};
		\node[] (X3) at (0, -2) {$X_4^{[0,T]}$};
		\draw[arr,color=red] (W1) to (X1);
		\draw[arr,color=red] (W2) to (X2);
		\draw[arr,bend right=20] (X1) to (X2);
		\draw[arr,bend right=20] (X2) to (X1);
		\draw[arr,color=red] (X2) to (X3);
	\end{tikzpicture}
	\hspace{8pt}
	\begin{tikzpicture}[scale=1,xscale=1.3,yscale=1.2]
		\node[] () at (-2, -.5) {$\overset{\textrm{Def.\ \ref{def:time-split_dscm}}}{\implies}$};
		\node[] () at (-2, -1) {$\overset{\textrm{Def.\ \ref{def:collapsed_dscm}}}{\impliedby}$};
		\node[] () at (-.35, 1.8) {$G(\Mcal_{\ev^+(0,s,t)})$:};
		\node[] (W1) at (0, 1) {$W$};
		\node[] (N2) at (4, 1) {$N$};
		\node[] (X2[0s]) at (0, -1) {$X_2^{(0,s)}$};
		\node[] (X2[s]) at (1, -1) {$X_2^{s}$};
		\node[] (X2[st]) at (2, -1) {$X_2^{(s,t)}$};
		\node[] (X2[t]) at (3, -1) {$X_2^{t}$};
		\node[] (X2[tT]) at (4, -1) {$X_2^{(t,T]}$};
		\draw[arr,color=red!20,draw] (N2) to (X2[0s]);
		\draw[arr,color=red!20,draw] (N2) to (X2[s]);
		\draw[arr,color=red!20,draw] (N2) to (X2[st]);
		\draw[arr,color=red!20,draw] (N2) to (X2[t]);
		\draw[arr,color=red!20,draw] (N2) to (X2[tT]);
		\node[] (X1[0s]) at (0, 0) {$X_1^{(0,s)}$};
		\node[] (X1[s]) at (1, 0) {$X_1^{s}$};
		\node[] (X1[st]) at (2, 0) {$X_1^{(s,t)}$};
		\node[] (X1[t]) at (3, 0) {$X_1^{t}$};
		\node[] (X1[tT]) at (4, 0) {$X_1^{(t,T]}$};
		\node[] (X4[0s]) at (0, -2) {$X_4^{(0,s)}$};
		\node[] (X4[s]) at (1, -2) {$X_4^{s}$};
		\node[] (X4[st]) at (2, -2) {$X_4^{(s,t)}$};
		\node[] (X4[t]) at (3, -2) {$X_4^{t}$};
		\node[] (X4[tT]) at (4, -2) {$X_4^{(t,T]}$};
		\node[] (X1[0]) at (-1, 0) {$X^0_1$};
		\node[] (X2[0]) at (-1, -1) {$X^0_2$};
		\node[] (X4[0]) at (-1, -2) {$X^0_4$};
		\draw[arr] (X1[0]) to (X1[0s]);
		\draw[arr] (X2[0]) to (X2[0s]);
		\draw[arr,color=red!20,draw] (W1) to (X1[0s]);
		\draw[arr,color=red!20,draw] (W1) to (X1[s]);
		\draw[arr,color=red!20,draw] (W1) to (X1[st]);
		\draw[arr,color=red!20,draw] (W1) to (X1[t]);
		\draw[arr,color=red!20,draw] (W1) to (X1[tT]);
		\draw[arr] (X1[0s]) to (X1[s]);
		\draw[arr] (X1[s]) to (X1[st]);
		\draw[arr] (X1[st]) to (X1[t]);
		\draw[arr] (X1[t]) to (X1[tT]);
		\draw[arr] (X2[0s]) to (X2[s]);
		\draw[arr] (X2[s]) to (X2[st]);
		\draw[arr] (X2[st]) to (X2[t]);
		\draw[arr] (X2[t]) to (X2[tT]);
		\draw[arr] (X1[0]) to (X2[0s]);
		\draw[arr] (X2[0]) to (X1[0s]);
		\draw[arr] (X4[0]) to (X4[0s]);
		\draw[arr,bend right=20] (X1[0s]) to (X2[0s]);
		\draw[arr,bend right=20] (X2[0s]) to (X1[0s]);
		\draw[arr] (X1[0s]) to (X2[s]);
		\draw[arr] (X2[0s]) to (X1[s]);
		\draw[arr] (X1[s]) to (X2[st]);
		\draw[arr] (X2[s]) to (X1[st]);
		\draw[arr,bend right=20] (X1[st]) to (X2[st]);
		\draw[arr,bend right=20] (X2[st]) to (X1[st]);
		\draw[arr] (X1[st]) to (X2[t]);
		\draw[arr] (X2[st]) to (X1[t]);
		\draw[arr] (X1[t]) to (X2[tT]);
		\draw[arr] (X2[t]) to (X1[tT]);
		\draw[arr,bend right=20] (X1[tT]) to (X2[tT]);
		\draw[arr,bend right=20] (X2[tT]) to (X1[tT]);
		\draw[arr,color=red] (X2[0s]) to (X4[0s]);
		\draw[arr] (X2[0s]) to (X4[s]);
		\draw[arr,color=red] (X2[st]) to (X4[st]);
		\draw[arr] (X2[st]) to (X4[t]);
		\draw[arr,color=red] (X2[tT]) to (X4[tT]);
		\draw[arr,color=red] (X2[s]) to (X4[s]);
		\draw[arr,color=red] (X2[t]) to (X4[t]);
		\draw[arr] (X2[0]) to (X4[0s]);
		\draw[arr] (X2[s]) to (X4[st]);
		\draw[arr] (X2[t]) to (X4[tT]);
		\draw[arr] (X4[0s]) to (X4[s]);
		\draw[arr] (X4[s]) to (X4[st]);
		\draw[arr] (X4[st]) to (X4[t]);
		\draw[arr] (X4[t]) to (X4[tT]);
	\end{tikzpicture}
	\hspace{8pt}
	\begin{tikzpicture}[scale=1,xscale=1.3,yscale=1.2]
		\node[] () at (-2, -.5) {$\overset{\textrm{Def.\ \ref{def:subsampled_dscm}}}{\implies}$};
		\node[] () at (-.4, 1.8) {$G(\Mcal_{\ev(0,s,t)})$:};
		\node[] (W1) at (0, 1) {$W$};
		\node[] (N2) at (1, 1) {$N$};
		\node[] (X2[s]) at (0, -1) {$X_2^{s}$};
		\node[] (X2[t]) at (1, -1) {$X_2^{t}$};
		\draw[arr,color=red!20,draw] (N2) to (X2[s]);
		\draw[arr,color=red!20,draw] (N2) to (X2[t]);
		\node[] (X1[s]) at (0, 0) {$X_1^{s}$};
		\node[] (X1[t]) at (1, 0) {$X_1^{t}$};
		\node[] (X4[s]) at (0, -2) {$X_4^{s}$};
		\node[] (X4[t]) at (1, -2) {$X_4^{t}$};
		\node[] (X1[0]) at (-1, 0) {$X^0_1$};
		\node[] (X2[0]) at (-1, -1) {$X^0_2$};
		\node[] (X4[0]) at (-1, -2) {$X^0_4$};
		\draw[arr] (X1[0]) to (X1[s]);
		\draw[arr] (X2[0]) to (X2[s]);
		\draw[arr] (X1[0]) to (X2[s]);
		\draw[arr] (X2[0]) to (X1[s]);
		\draw[arr] (X4[0]) to (X4[s]);
		\draw[arr,color=red!20,draw] (W1) to (X1[s]);
		\draw[arr,color=red!20,draw] (W1) to (X1[t]);
		\draw[arr] (X1[s]) to (X2[t]);
		\draw[arr] (X2[s]) to (X1[t]);
		\draw[arr] (X2[0]) to (X4[s]);
		\draw[arr] (X2[s]) to (X4[t]);
		\draw[arr,color=red] (X2[s]) to (X4[s]);
		\draw[arr,color=red] (X2[t]) to (X4[t]);
		\draw[arr] (X1[s]) to (X1[t]);
		\draw[arr] (X2[s]) to (X2[t]);
		\draw[arr] (X4[s]) to (X4[t]);
	\end{tikzpicture}
	\caption{The graph $G(\Mcal)$, time-split $G(\Mcal_{\ev^+(0,s,t)})$ and subsampled $G(\Mcal_{\ev(0,s,t)})$. For graphical appeal, the processes $X_1, X_2$ and $X_4$ are assumed to be temporally Markov.}
	\label{fig:dscm_time_evaluations}
\end{figure*}

Given a system of SDEs $\Dcal$, we can construct a system of SDEs $\Dcal_{\ev(r)}$ by splitting up the time interval $[0,T] = [0,r) \cup \{r\} \cup (r,T]$, and consider the SDEs
	\begin{align*}
			X_v(t) & = X_v^0 + \int_0^t g_{v}(s-, X_{\alpha(v)})\diff h_{v}(s, X_{\beta(v)})  \\
			X_v(r) & = X_v^0 + \int_0^r g_{v}(s-, X_{\alpha(v)})\diff h_{v}(s, X_{\beta(v)})  \\
			X_v(u) & = X_v(r) + \int_r^u g_{v}(s-, X_{\alpha(v)})\diff h_{v}(s, X_{\beta(v)})
		\end{align*}
	for all $v\in V$ and all $t\in [0,r)$ and $u\in (r,T]$.
We can do this type of time-splitting also on the level of a DSCM, to be formally defined below.\notefn{For DSCMs induced by SDEs, Definition \ref{def:time-split_dscm} produces none of the edges of the form $X_v^{t}\to X_v^{(t,T]}$ that Figure \ref{fig:dscm_time_evaluations} displays, because the structural equations of $\Mcal_\Dcal$ are It\^o maps in which $v$ is not a parent of itself. \citet{boeken2026causal} provide an alternative definition of the induced DSCM (with self-dependencies) that resolves this issue. The self-loops $v\to v$ that this retains in the summary graph do not affect $\sigma$- or $d$-separations.} The graphs of an example DSCM $\Mcal$ and its time-split version $\Mcal_{\ev^+(0, s, t)}$ and subsampled version $\Mcal_{\ev(0, s, t)}$ are given in Figure \ref{fig:dscm_time_evaluations}.

\begin{definition}[Time-split DSCM]\label{def:time-split_dscm}
	Let a set of distinct time-indices $\tau = \{t_1, ..., t_m\}\subseteq \Tcal$ and DSCM $\Mcal=\angs{V_p \times \Ecal, W, \Xcal_V, \Xcal_W, f, \PP(X_W)}$ be given. The \emph{time-split DSCM} $\Mcal_{\ev^+(\tau)}$ is defined as $\Mcal_{\ev^+(\tau)} = \angs{\tilde{V}, W, \Xcal_{\tilde{V}}, \Xcal_W, \tilde{f}, \PP(X_W)}$ with differences from $\Mcal$:
	\begin{enumerate}[label=\roman*)]
		\item $\tilde{V} = V_p \times \tilde{\Ecal}$, where $\tilde{\Ecal}$ is a further subdivision of $\Ecal$, where for every $t\in \tau$, any interval $[s,u] \in \Ecal$ containing $t$ is partitioned into $\{[s,t), \{t\}, (t,u]\}\subseteq \tilde{\Ecal}$;\footnote{A similar construction applies for (half-)open intervals.}
		\item standard Borel space $\Xcal_{\tilde{V}} = \bigtimes_{(v, \Ical)\in \tilde{V}} D(\Ical, \RR)$;
		\item for every $(v, \Ical) \in \tilde{V}$ a structural equation $\tilde{f}_{v^\Ical}: \Xcal_{\tilde{V}} \times \Xcal_W \to \Xcal_{v^\Ical}$ with for all $t\in\Ical$
		      \begin{align*}
			      \tilde{f}_{v^{\Ical}}(x_{\tilde{V}}, x_W)(t) & := f_v\left(x_{\alpha(v)}^{\tilde{\Ecal}_{< \Ical}}, x_{\beta(v)}^{\tilde{\Ecal}_{\leq \Ical}},  x_W \right)(t),
		      \end{align*}
		      where $\tilde{\Ecal}_{\leq \Ical} := \{\Ical' \in \tilde{\Ecal} \mid \exists s \in \Ical', t\in \Ical :s \leq t\}$.
	\end{enumerate}
\end{definition}
The above definition can straightforwardly be extended to differing time-evaluations $\tau_v$ for each $v\in V$.
Note the explicit modelling of the predictable dependence on $\alpha$, and adapted dependence on $\beta$.

\begin{definition}[Subsampled DSCM]\label{def:subsampled_dscm}
	Let $\Mcal_{\ev^+(\tau)}$ be a time-split DSCM with endogenous variables $V_p\times \Ecal$. Let $\Ecal' \subseteq \Ecal$ denote the set of non-singleton intervals. The \emph{subsampled DSCM} is defined as $\Mcal_{\ev(\tau)} := (\Mcal_{\ev^+(\tau)})_{\marg(V_p\times \Ecal')}$.
\end{definition}

\begin{example}
	Let $\Mcal_\Dcal$ be the DSCM from Example \ref{ex:dscm}, and let $\Mcal := (\Mcal_\Dcal)_{\marg(L)}$ be the marginalisation of $\Mcal_\Dcal$ with respect to $L :=\{X_3^0, X_3, X_4^0\}$. The graphs $G(\Mcal)$, time-split $G(\Mcal_{\ev^+(s,t)})$ and subsampled $G(\Mcal_{\ev(s,t)})$ are depicted in Figure \ref{fig:dscm_time_evaluations}. We assume the processes $X_1, X_2$ and $X_4$ to be temporally Markov with time-invariant dynamics for graphical appeal.
\end{example}

\begin{definition}[Path concatenation]
	Let $\Ical_X, \Ical_Y$ be two adjacent intervals in $[0,T]$ with $s := \sup \Ical_X = \inf \Ical_Y$, and let $X \in D(\Ical_X, \RR)$ and $Y \in D(\Ical_Y, \RR)$. The concatenation of $X$ and $Y$ is the path $X*Y \in D(\Ical_X \cup \Ical_Y, \RR)$ defined by
	\begin{equation*}
		(X*Y)(t) := \begin{cases}
			X(t)  & \text{if $t \in [0,s) \cap \Ical_X$}  \\
			Y(t+) & \text{if $t\in [s, T] \cap \Ical_Y$.}
		\end{cases}
	\end{equation*}
\end{definition}

\begin{definition}[Collapsed DSCM]\label{def:collapsed_dscm}
	For a DSCM $\Mcal = \angs{V\times \Ecal, W, \Xcal_V, \Xcal_W, f, \PP(X_W)}$, the \emph{collapsed DSCM} is defined as $\Mcal_{*} := \angs{\tilde{V}, W, \Xcal_{\tilde{V}}, \Xcal_W, \tilde{f}, \PP(X_W)}$, where $\tilde{V} := V\times \tilde{\Ecal}$ with $\tilde{\Ecal} := \bigcup \Ecal$ and structural equations
	\begin{equation*}
		\tilde{f}_v(x_V, x_W) := \bigast_{\Ical \in \Ecal} f_{v^\Ical}(x_V, x_W).
	\end{equation*}
\end{definition}

Note that this is consistent with the nomenclature that is introduced earlier, where we referred to a DSCM as collapsed if $\Ecal= \Tcal$.

The following proposition shows that no information is lost when time-splitting a DSCM, but information \emph{is} lost when subsampling a DSCM.
\begin{proposition}\label{thm:time-splitting_consistent}
	Given a DSCM $\Mcal$ with evaluation index $\Ecal$ that partitions $\Tcal$ and given a set of distinct time-indices $\tau = \{t_1, ..., t_m\}\subseteq \Tcal$, we have $\Mcal = \Mcal_{\ev^+(\tau)*}$. On the other hand the map $\Mcal \mapsto \Mcal_{\ev(\tau)}$ is not invertible, and $\Mcal \neq \Mcal_{\ev(\tau)*}$ if $\tau \neq \Tcal$.
\end{proposition}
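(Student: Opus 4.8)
The plan is to prove both statements by comparing the two DSCM tuples component by component, reducing everything to the causal mechanisms. For the first equality, time-splitting followed by collapsing leaves the exogenous index set $W$, the exogenous law $\PP(X_W)$ and the product structure of the domains untouched, and collapsing re-merges the refined evaluation index into $\bigcup\tilde\Ecal = \bigcup\Ecal = \Tcal$; hence $\Mcal$ and $\Mcal_{\ev^+(\tau)*}$ agree on every component except possibly the causal mechanism, and it suffices to show $\bigast_{\Ical\in\tilde\Ecal}\tilde f_{v^\Ical} = f_v$ for each $v\in V_p$.

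To establish this identity I would fix $v$ and a time $t\in\Tcal$ and let $\Ical\in\tilde\Ecal$ be the unique sub-interval of the refined partition containing $t$. By the definition of the collapsed mechanism (Definition \ref{def:collapsed_dscm}) together with the path-concatenation convention, the collapsed equation evaluated at $t$ equals $\tilde f_{v^\Ical}(x_V,x_W)(t)$, which by Definition \ref{def:time-split_dscm} is $f_v\!\left(x_{\alpha(v)}^{\tilde\Ecal_{<\Ical}},\, x_{\beta(v)}^{\tilde\Ecal_{\le\Ical}},\, x_W\right)(t)$. The key step is then to observe that the restricted argument $x_{\alpha(v)}^{\tilde\Ecal_{<\Ical}}$ agrees with the full trajectory $x_{\alpha(v)}$ on $[0,t)$ and that $x_{\beta(v)}^{\tilde\Ecal_{\le\Ical}}$ agrees with $x_{\beta(v)}$ on $[0,t]$, because $\tilde\Ecal_{<\Ical}$ and $\tilde\Ecal_{\le\Ical}$ are designed to cover exactly the predictable, respectively adapted, past of $t$. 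Since $f_v$ has predictable dependence on its $\alpha$-argument and adapted dependence on its $\beta$-argument (Definition \ref{def:adapted_predictable}, cf.\ Remark \ref{rem:sol_fn_adaptedness}), replacing the full trajectories by these restrictions does not change the value at $t$, so $\tilde f_{v^\Ical}(x_V,x_W)(t) = f_v(x_V,x_W)(t)$. As $t$ was arbitrary and the concatenation agrees with $f_v$ at every interior and boundary point (the latter via the left/right-limit conventions $x(r)=x(s+)$, $x(u)=x(t-)$), the concatenated mechanism equals $f_v$, giving $\Mcal = \Mcal_{\ev^+(\tau)*}$.

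For the second claim I would argue on the level of the variable index sets, which is where the information loss becomes visible. By Definition \ref{def:subsampled_dscm}, $\Mcal_{\ev(\tau)} = (\Mcal_{\ev^+(\tau)})_{\marg(V\times\Ecal')}$ marginalises out every non-singleton interval in $\Ecal'$, leaving endogenous variables indexed only by the singleton evaluation points $\{t_i\}$ with $t_i\in\tau$. Collapsing this model (Definition \ref{def:collapsed_dscm}) therefore produces a DSCM whose evaluation index is the union of the surviving singletons, namely $\tau$, so its endogenous domains are copies of $D(\tau,\RR)$ rather than $D(\Tcal,\RR)$. Because $\tau\neq\Tcal$, these domains — and hence the two tuples — cannot coincide, which yields $\Mcal\neq\Mcal_{\ev(\tau)*}$; intuitively, once the trajectories on the non-singleton intervals have been marginalised away, there is nothing left from which collapsing could reconstruct the path on all of $\Tcal$.

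The main obstacle I anticipate lies entirely in the first claim, specifically in the interval bookkeeping: one must check carefully that $\tilde\Ecal_{<\Ical}$ and $\tilde\Ecal_{\le\Ical}$ cover precisely $[0,t)$ and $[0,t]$ for every $t$ in each sub-interval $\Ical$ — including the singleton intervals $\{t_i\}$, where the predictable and adapted pasts differ by a single point — and that the concatenation $\bigast$ glues the pieces together consistently at the split points. Once this alignment between the refined index sets and the adaptedness/predictability of $f_v$ is pinned down, the remainder is the routine verification that the two tuples agree component-wise.
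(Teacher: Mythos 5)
Your proposal is correct and follows essentially the same route as the paper: the paper disposes of the first claim with ``follows directly from Definition \ref{def:collapsed_dscm}'' (your interval bookkeeping and use of predictable/adapted dependence is just the spelled-out version of that unfolding), and proves the second claim exactly as you do, by observing that the endogenous index sets (equivalently, the domains) of $\Mcal$ and $\Mcal_{\ev(\tau)*}$ differ when $\tau \neq \Tcal$. The only difference is verbosity, not substance.
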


Analysis of the information-loss in the mapping $\Mcal \mapsto \Mcal_{\ev(\tau)}$ could be particularly interesting in the light of impossibility results for inference of features of $\Mcal$ from the subsampled DSCM $\Mcal_{\ev(\tau)}$.

\subsection{Local Conditional Independence}\label{sec:lci}
Throughout this section, let $\Mcal$ be a simple DSCM with time-interval $\Tcal = [0,T]$ for some $T\in\RR$, with endogenous variables $A, B, C \in V$ such that $X_A, X_B, X_C \in \SSb_{\FF_{\Xcal_W}}$ (i.e.\ they are semimartingales). With respect to $\Fcal^{A, B, C}$, such processes have a unique Doob-Meyer decomposition $X_B = \Lambda_B + M_B$, where `the drift term' $\Lambda_B$ is a predictable $\Fcal^{A, B, C}_t$-adapted process, and `the noise term' $M_B$ is a $\Fcal^{A, B, C}_t$-adapted local martingale.\notefn{This requires $X_B$ to be a \emph{special} semimartingale. For general semimartingales the decomposition is not unique, and $\Lambda_B$ need not be predictable. See \cite{boeken2026causal} for a detailed discussion.} We assume that the process $\Lambda_B$ is absolutely continuous, i.e.\ $\Lambda_B = \int_0^t \lambda_B(s)\diff s$ for some $\Fcal^{A, B, C}_t$-predictable \emph{intensity} process $\lambda_B$.

\begin{definition}[Local conditional independence]\label{def:local_independence}
	We say that $X_B$ is \emph{locally conditionally independent} from $X_A$ given $X_C$, written $X_A \not\to X_B \mid X_C$, if the intensity process $\lambda_B$ has a $\Fcal_{t}^{B, C}$-predictable version.\footnote{If one does not want to assume the existence of the intensity $\lambda_B$, local conditional independence can be defined more generally as $\Lambda_B$ having a $\Fcal^{B, C}_t$-adapted version, or equivalently, that $M_{B}$ has a version that is a $\Fcal^{B, C}_t$-adapted martingale \citep{mogensen2020markov}.}
\end{definition}

In the following theorem, we relate local independence to statistical relations between the variables in time-split DSCMs.

\begin{theorem}\label{thm:dscm_lci}
	Considering the DSCM $\Mcal$ and for every $t\in \Tcal$ the time-split DSCM $\Mcal_{\ev^+(t)}$, we have that\notefn{The first implication of Theorem \ref{thm:dscm_lci} does not hold. If $X_B$ has continuous sample paths, then $X_B^t = X_B^{t-}$ a.s.\ so the conditional independence holds trivially \citep{christgau2023nonparametric}. \citet{boeken2026causal} show instead that $X_A^{[0,s]}\Indep X_B^{(s,t]}\given X_C^{[0,s]}$ for all $s<t$ implies $X_A \not\to X_B \mid X_C$, generalising \cite{florens1996noncausality}.}
	\begin{align*}
		         & X_{B}^t \Indep X_{A}^{[0,t)} | X_{B}^{[0,t)}, X_{C}^{[0,t)} \text{ for all $t\in\Tcal$}                        \\
		\implies & X_A \not\to X_B \mid X_C                                                                                       \\
		\implies & \EE[X_{B}^t \mid X_{A}^{[0,t)}, X_{B}^{[0,t)}, X_{C}^{[0,t)}] = \EE[X_{B}^t \mid X_{B}^{[0,t)}, X_{C}^{[0,t)}] \\
		         & \hspace{165pt}\text{ for all $t\in\Tcal$.}
	\end{align*}
\end{theorem}

\begin{definition}[Local independence graph, \cite{mogensen2022graphical}]\label{def:lci_graph}
	Given a set $V$ of semimartingales $X_v$ with joint distribution $\PP(X_V)$, a directed mixed graph $G=(V, E)$ is a local independence graph for $\PP(X_V)$ if $A \to B$ not in $G(\Mcal)$ implies $X_A \not\to_{\PP} X_B | X_{V\setminus A}$ for all $A, B \in V$.
\end{definition}

Existing work on local independence often assume some kind of \emph{autonomy} of the processes $X_A, X_B, X_C$. \cite{aalen2012causality} assumes that the martingales $M_A, M_B, M_C$ are strongly orthogonal (i.e.\ so have zero quadratic covariation). \cite{didelez2008graphical,roysland2024graphical} consider pure jump-type processes and assume orthogonality, ensuring they never jump simultaneously. \cite{christgau2023nonparametric} require $X_A, X_C$ to be predictable, which ensures $M_A = M_C = 0$. We consider a slightly stronger notion of autonomy:

\begin{assumption}[Independent integrators]\label{ass:independent_integrators}
	Let $\Mcal$ be a collapsed DSCM with $\beta(v) \subseteq W$ and $\beta(v) \cap \beta(v') = \emptyset$ for all $v, v' \in V$.
\end{assumption}

In words, this means that there can be no instantaneous effects between the endogenous variables, nor any instantaneous dependencies between them through latent confounders.

\begin{theorem}\label{thm:dscm_lci_markov_property}
	For a collapsed DSCM $\Mcal$ that satisfies Assumption \ref{ass:independent_integrators} we have:\notefn{The proof is invalid, as it relies on the incorrect Theorem~\ref{thm:dscm_lci}. \cite{boeken2026causal} show that Claim 1 holds under the condition that the exogenous processes enter each mechanism only through the initial condition and through the integrator, which is required to be a martingale. Claim 2 is not established; \citet{boeken2026causal} instead prove a local independence Markov property via $\sigma$-separation in the \emph{time-split} graph.}
	\begin{enumerate}
		\item $G(\Mcal)$ is a local independence graph,
		\item a $\sigma$-separation local independence global Markov property:
		      \begin{align*}
			      A \Perp_{G(\Mcal)}^{\sigma} B | C \implies X_A \not\to X_B \mid X_C.
		      \end{align*}
	\end{enumerate}
\end{theorem}

\begin{example}
	The graph $G(\Mcal_\Dcal)$ in Figure \ref{fig:dscm_graph} is not guaranteed to be a local independence graph, but $G((\Mcal_\Dcal)_{\marg(L)})$ with $L := \{X_3, X_4\}$ is.
\end{example}

We note that the above Markov property is weaker than existing Markov properties for $\delta$ or $\mu$ separation (e.g.\ \cite{didelez2008graphical} for orthogonal counting processes, and \cite{mogensen2018causal} for It\^o processes with independent noise), as $\delta$- and $\mu$-separation imply $\sigma$-separation. However, it applies to any simple DSCM induced by uniquely solvable systems of SDEs with independent integrators, so spans a much larger class of processes.\footnote{\nopagebreak A notable exception is given in \cite{mogensen2022graphical} for Ornstein-Uhlenbeck processes with correlated noise.}

\section{Further Applications}\label{sec:further_applications}
In this section, we briefly elaborate how existing results for causal effect identification and causal discovery can directly be applied to DSCMs.

\subsection{Causal Effect Identification}

We note that perfect interventions can be modelled for an entire trajectory $X_v^{[0,T]}$ in a collapsed DSCM, on a specific subinterval $X_v^{[s, t]}$ with $[s,t]\subseteq \Tcal$ in a time-split DSCM, or on a time-point $X_v^t$ in either a time-split or subsampled DSCM, e.g. in the DSCMs depicted in Figure \ref{fig:dscm_time_evaluations}.

The rules of do-calculus are valid for simple (D)SCMs \citep{forre2020causal}. These criteria are stated in terms of so-called \emph{intervention nodes}, which for variable $X\in V$ are defined as vertices $I_X$ in $G(\Mcal)$ with $I_X \to X$ as their only connected edge.
\begin{theorem}[Do-calculus, \cite{forre2020causal}]\label{thm:do_calculus}
	Given a (D)SCM $\Mcal$ with $X, Y, Z, W \subseteq V$, we have\notefn{Rules 2 and 3 require absolute continuity conditions. See \citet{boeken2026causal}.}
	
	\begin{enumerate}
		\item Insertion/deletion of observation:\\
		$Y\Perp^\sigma X \mid Z, \Do(W) \implies$\\
		\hspace*{15pt}$\PP(Y\mid X, Z, \Do(W)) = \PP(Y\mid Z, \Do(W))$
		\item Action/observation exchange:\\
		$Y\Perp^\sigma I_X \mid X, Z, \Do(W) \implies$\\
		\hspace*{15pt}$\PP(Y\mid \Do(X), Z, \Do(W)) = \PP(Y\mid X, Z, \Do(W))$
		\item Insertion/deletion of actions:\\
		$Y\Perp^\sigma I_X \mid Z, \Do(W) \implies$\\
		\hspace*{15pt}$\PP(Y\mid \Do(X), Z, \Do(W)) = \PP(Y\mid Z, \Do(W))$.
	\end{enumerate}
	
\end{theorem}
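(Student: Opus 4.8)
The plan is to reduce the statement to the do-calculus for general simple SCMs established in \cite{forre2020causal}, by verifying that a simple DSCM is an instance of a simple SCM to which that result applies verbatim. Since a DSCM is by construction a special case of an SCM in the sense of \cite{bongers2021foundations}, and since perfect interventions on DSCMs coincide with perfect interventions on SCMs, the intervention nodes $I_X$, the graph $G(\Mcal)$, and the notion of $\sigma$-separation all carry over unchanged from the general SCM setting. The only genuine content is therefore to check that every object appearing in the three rules is well-defined for simple DSCMs and that the model remains simple under the operations involved.

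First I would recall the standard construction underlying the do-calculus: augment $\Mcal$ by adjoining, for each variable $X$ that may be intervened upon, an exogenous regime indicator $I_X$ taking values in $\Xcal_X$ together with an \emph{idle} symbol, and replace the mechanism $f_X$ by one that outputs $I_X$ when $I_X$ is not idle and $f_X$ otherwise. This augmented model $\Mcal^{\mathrm{aug}}$ has augmented graph obtained from $G(\Mcal)$ by adding the edges $I_X \to X$, and its interventional distributions under $\Do(W)$ and $\Do(X)$ are recovered by fixing the corresponding regime indicators. I would verify that $\Mcal^{\mathrm{aug}}$ is again simple: adjoining $I_X$ and fixing it amounts to the perfect-intervention construction, under which simplicity is preserved, while the observational and interventional distributions that appear are obtained by further conditioning and marginalisation, the latter preserved by Proposition \ref{thm:marginalised_simple_dscm_is_simple}.

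With $\Mcal^{\mathrm{aug}}$ in hand, each of the three rules follows by applying the $\sigma$-separation Markov property (Theorem \ref{thm:markov_property}) to $\Mcal^{\mathrm{aug}}$ with the regime indicators for $W$ held fixed: the $\sigma$-separation hypothesis in each rule is precisely a $\sigma$-separation statement among $Y$, $X$ (resp.\ $I_X$) and $Z$ in the augmented graph, which the Markov property converts into the conditional independence that encodes the claimed equality of distributions, yielding insertion/deletion of observation, action/observation exchange, and insertion/deletion of actions respectively. Crucially, for these conditional independences to translate into equalities of distributions one needs the conditional distributions $\PP(X_A \mid X_B)$ to be well-defined, which is guaranteed by our refinement of the DSCM definition to use the standard Borel space $D(\Tcal, \RR)$ of trajectories.

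I expect the main obstacle to be bookkeeping rather than any new mathematics: one must carefully track that simplicity, and hence the existence of every conditional, interventional, and marginal distribution appearing in the three rules, is preserved under the combination of augmentation, intervention, conditioning, and marginalisation used to instantiate the general theorem of \cite{forre2020causal}. Once this is established, the dynamical and adaptedness structure of the DSCM plays no further role, and the three identities hold as special cases of the SCM do-calculus.
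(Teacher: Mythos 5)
The paper gives no proof of this theorem: it is imported verbatim from \cite{forre2020causal}, the only implicit content being that a simple DSCM is a simple SCM (with well-defined conditional, interventional, and marginal distributions) to which that general result applies. Your proposal performs exactly this reduction---checking that simplicity and well-definedness survive augmentation, intervention, and marginalisation, then invoking the cited do-calculus---so it is correct and takes essentially the same route as the paper.
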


As a consequence, generalised adjustment formulae (like backdoor adjustment) based on $\sigma$-separation criteria are also valid for simple DSCMs, as is the ID algorithm. For more information, we refer to \cite{forre2020causal}.

The above mentioned results consider the identification of certain estimands in terms of observational distributions. We note that the construction of \emph{estimators} for such expressions can be highly nontrivial when the variables take values in function spaces.

\subsection{Causal Discovery}

For a given simple DSCM with graph $G$, let $\textrm{IM}_\sigma(G)$ denote the $\sigma$-independence model of $G$, i.e.\ the set of all $\sigma$-separations that are present in $G$. Constraint-based causal discovery algorithms are methods that reconstruct (an equivalence class of) $G$ from the independence model $\textrm{IM}_\sigma(G)$. An example is \emph{Fast Causal Discovery} (FCI) \citep{spirtes1995causal,zhang2008completeness}, which outputs a \emph{Partial Ancestral Graph} (PAG), and which is shown to be sound and complete for simple (and thus possibly cyclic) SCMs. In the following let $F$ denote FCI as a mapping from an independence model to a PAG.

\begin{theorem}[FCI, \cite{mooij2020constraintbased}]
	Let $\Mcal$ be a simple (D)SCM with graph $G$, then
	
	\begin{enumerate}
		\item FCI is sound: $G \in F(\textrm{IM}_\sigma(G))$
		\item FCI is complete: let $G'$ be another DMG, then $F(\textrm{IM}_\sigma(G)) = F(\textrm{IM}_\sigma(G'))$ iff $\textrm{IM}_\sigma(G) = \textrm{IM}_\sigma(G')$.
	\end{enumerate}
	
\end{theorem}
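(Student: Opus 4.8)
The plan is to reduce the statement entirely to the known soundness and completeness of FCI for simple SCMs proved by \cite{mooij2020constraintbased}, exploiting the fact that a simple DSCM is, by construction, a particular simple SCM whose associated object is just a directed mixed graph.

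First I would record the structural facts needed to invoke the cited result. A simple DSCM satisfies Definition \ref{def:simple_dscm}, hence is a simple SCM in the sense of \cite{bongers2021foundations}; its graph $G = G(\Mcal)$ is a directed mixed graph on the endogenous index set $V$, and — as noted in the excerpt — simple SCMs admit no self-cycles, so $G$ has no self-loops. This places $G$ squarely in the class of DMGs for which \cite{mooij2020constraintbased} establish their results.

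Second I would emphasise that both the map $F$ and the object $\textrm{IM}_\sigma(G)$ are purely graph-theoretic: $\textrm{IM}_\sigma(G)$ is the set of $\sigma$-separation statements holding in $G$, and $F$ sends a $\sigma$-independence model to a PAG via the FCI procedure. Neither depends on the distribution $\PP(X_V, X_W)$ nor on any `dynamic' feature of the DSCM beyond $G$ itself. Consequently, once $G$ is identified as an admissible DMG, soundness ($G \in F(\textrm{IM}_\sigma(G))$) and completeness (the PAG being a complete invariant of $\textrm{IM}_\sigma(G)$, i.e.\ $F(\textrm{IM}_\sigma(G)) = F(\textrm{IM}_\sigma(G'))$ iff $\textrm{IM}_\sigma(G) = \textrm{IM}_\sigma(G')$) follow directly by applying the main theorem of \cite{mooij2020constraintbased} verbatim to $G(\Mcal)$.

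The only genuine verification — and hence the main obstacle — is checking that the class of DMGs arising as $G(\Mcal)$ for simple DSCMs is contained in the class covered by \cite{mooij2020constraintbased}. This amounts to the two points above: that $G(\Mcal)$ is a DMG (directed edges from the augmented graph together with bidirected edges induced by common exogenous parents) and that it has no self-loops. The population-level link between $\sigma$-separation in $G$ and conditional independence in $\PP$ is supplied by the Markov property (Theorem \ref{thm:markov_property}); but since the statement is phrased at the level of the independence model $\textrm{IM}_\sigma(G)$, this link is already packaged into the cited combinatorial result and requires no further argument here.
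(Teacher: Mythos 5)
Your proposal is correct and matches the paper's treatment: the paper gives no separate proof of this theorem, precisely because it is imported verbatim from \cite{mooij2020constraintbased} once one observes that a simple DSCM is a simple SCM whose graph $G(\Mcal)$ is a DMG without self-loops, and that $F$ and $\textrm{IM}_\sigma(G)$ depend only on $G$. Your write-up simply makes explicit the reduction the paper leaves implicit, so nothing is missing.
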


Other constraint-based causal discovery algorithms that are known to be sound for (possibly cyclic) simple DSCMs are Local Causal Discovery \citep{cooper1997simple,mooij2020joint} and Y-structures \citep{mani2006bayesian,mooij2020joint}.

In practice, one requires a conditional independence (CI) test to map a dataset with samples of an SCM $\Mcal$ to an independence model (assuming faithfulness). If variables $X, Y, Z$ take values in $D(\Tcal, \RR)$, testing $X\Indep Y\given Z$ is not straightforward. Recently, first results for such functional CI testing are proposed in \cite{lundborg2022conditional,laumann2023kernelbased} and \cite{manten2024signature}.

\begin{example}
	Let $\Mcal_\Dcal$ be the DSCM from Example \ref{ex:dscm}. The output of FCI on the $\sigma$-separation independence model of $G((\Mcal_\Dcal)_{\ev^+(0)})$ is given in Figure \ref{fig:fci_output}.
\end{example}

\begin{figure}[!hbt]
	\centering
	\begin{tikzpicture}[xscale=1.4,yscale=1.2]
		\node[var] (X1) at (0, 0) {$X_1$};
		\node[var] (X2) at (-0.5, -1) {$X_2$};
		\node[var] (X3) at (.5, -1) {$X_3$};
		\node[var] (X4) at (0, -2) {$X_4$};
		\node[var] (X1[0]) at (-1.5, 0) {$X^0_1$};
		\node[var] (X2[0]) at (-1.5, -1) {$X^0_2$};
		\node[var] (X3[0]) at (1.5, -1) {$X^0_3$};
		\node[var] (X4[0]) at (-1.5, -2) {$X^0_4$};
		\draw[carc] (X1) to (X2);
		\draw[carc] (X2) to (X3);
		\draw[carc] (X3) to (X1);
		\draw[arr] (X2) to (X4);
		\draw[car] (X1[0]) to (X1);
		\draw[car] (X1[0]) to (X2);
		\draw[car] (X1[0]) to (X3);
		\draw[car] (X2[0]) to (X1);
		\draw[car] (X2[0]) to (X2);
		\draw[car,bend right=40] (X2[0]) to (X3);
		\draw[car] (X3[0]) to (X1);
		\draw[car,bend left=40] (X3[0]) to (X2);
		\draw[car] (X3[0]) to (X3);
		\draw[car] (X4[0]) to (X4);
	\end{tikzpicture}
	\caption{The output of FCI on $\textrm{IM}_\sigma(G((\Mcal_\Dcal)_{\ev^+(0)}))$.}
	\label{fig:fci_output}
\end{figure}

We conjecture that `Tiered FCI' \citep{andrews2020completeness} is sound for cyclic DSCMs, and applying it on evaluated DSCM $\Mcal_{\ev(t)}$ and subsequently mapping its output back to the PAG for $\Mcal$ can be used to orient more edges in the output of regular FCI.

\section{Conclusion}
In this work, we have refined the concept of a \emph{Dynamic Structural Causal Model}. We have shown that a large class of systems of differential equations can be mapped to a DSCM, giving it a Markov property based on $\sigma$-separation. We have formalised the concepts of \emph{time-splitting} and \emph{subsampling}, which both subsequently result in a DSCM, and thus remain to have clear causal semantics. We have shown how a time-split DSCM can shed some light on the concept of local conditional independence, and that the graph of a collapsed DSCM can be interpreted as a local independence graph if all integrators are exogenous and independent. Additionally, we have shown how existing results for causal effect identification and constraint-based causal discovery for simple SCMs can directly be applied to DSCMs.


\bibliographystyle{apalike}
\setlength{\bibsep}{2pt}
{\small\bibliography{refs}}

\end{document}